\documentclass{amsart}

\usepackage[main=english]{babel} 
\usepackage{amsmath,amssymb,amscd,amsfonts,amsthm,amsrefs} 
\usepackage{microtype} 
\usepackage[babel,autostyle]{csquotes}

\newtheorem{theorem}{Theorem}[section]
\newtheorem{lemma}[theorem]{Lemma}
\newtheorem{proposition}[theorem]{Proposition}
\newtheorem{corollary}[theorem]{Corollary}

\theoremstyle{definition}

\theoremstyle{remark}

\numberwithin{equation}{section}

\newcommand{\iso}{\cong}
\newcommand{\semi}{\rtimes}
\newcommand{\normal}{\trianglelefteq}
\providecommand{\abs}[1]{\ensuremath{\lvert#1\rvert}}
\renewcommand{\phi}{\varphi}

\DeclareMathOperator{\inv}{inv} 
\DeclareMathOperator{\res}{res}
\DeclareMathOperator{\inc}{inc}
\DeclareMathOperator{\Syl}{Syl}

\begin{document}

\title{Local conjugacy in prosolvable groups}

\author{Michael C. Burkhart}
\curraddr{University of Chicago, Chicago, Illinois}
\email{burkh4rt@uchicago.edu}

\subjclass[2020]{Primary 20E18, 20E45, 20J06; Secondary 05E18}


\maketitle

\begin{abstract}
	For prosolvable groups, we provide conditions under which two locally conjugate supplements of a normal pronilpotent subgroup are conjugate, analogous to those given by Losey and Stonehewer for finite groups. Our proof relies on a primary-type decomposition of the first cohomology set which may be of independent interest. We also give local conditions for subgroup inclusion and a fixed-point result in the style of Glauberman.
\end{abstract}

\section{Introduction}

Two closed subgroups $H$ and $H'$ of a profinite group $G$ are \emph{locally conjugate} if for each prime $p$, a Sylow $p$-subgroup of $H$ is conjugate to a Sylow $p$-subgroup of $H'$. For a finite solvable group $G$, Losey and Stonehewer~\cite{Los79} proved two locally conjugate supplements of a normal nilpotent subgroup $N$ are conjugate if one of the following holds: (A) $G/N$ is nilpotent, (B) $N$ is abelian, or (C) the Sylow $p$-subgroups of $G$ have nilpotency class at most two. Evans and Shin~\cite{Eva88} showed that when $N$ is abelian, $G$ need not be solvable. Shin~\cite{Shi95} subsequently demonstrated that these results continue to hold for profinite groups $G$ and nilpotent $N$. In this work, we extend to pronilpotent $N$ as follows:

\begin{theorem}
	\label{thm:loc_conj}
	Let $N$ be a pronilpotent normal closed subgroup of a profinite group $G$. If either $G$ is prosupersolvable or $G/N$ is pronilpotent, then two closed supplements of $N$ are conjugate if and only if they are locally conjugate.
\end{theorem}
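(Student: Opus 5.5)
The easy direction is immediate: conjugate closed subgroups are locally conjugate, since conjugation carries Sylow subgroups to Sylow subgroups. For the converse, the plan is to reduce to the finite case proved by Losey--Stonehewer and Shin via an inverse-limit argument, and separately to handle pronilpotent $N$ directly through a cohomological reformulation.

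\textbf{Step 1: reduction to finite quotients.} Let $H,H'$ be locally conjugate closed supplements of $N$. For each open normal subgroup $U$ of $G$, the images $HU/U$ and $H'U/U$ are supplements of the normal nilpotent subgroup $NU/U$ in the finite group $G/U$. They remain locally conjugate, because Sylow subgroups of $H$ map onto Sylow subgroups of $HU/U$. The hypotheses also pass to $G/U$:
\begin{itemize}
\item a quotient of a prosupersolvable group is supersolvable, hence solvable;
\item $G/NU$ is a quotient of $G/N$, so it is nilpotent when $G/N$ is pronilpotent.
\end{itemize}
In the supersolvable case I would use condition (C) or the supersolvable refinement in Shin's work. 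If only (A)--(C) are available, I would instead pass to the nilpotent residual case: in a supersolvable group $G'/F$ is abelian, so I would work with $N$ enlarged by its intersection with the Fitting subgroup. Since that enlargement changes the supplements, this route needs more care, and for the supersolvable case I would rather use the cohomological argument of Step 2. For the case where $G/NU$ is nilpotent, condition (A) applies, although (A) presupposes solvability, which is exactly why Shin's profinite version is needed. The set $C_U=\{g\in G: (HU)^g = H'U\}$ is then a nonempty closed subset of $G$; it is a union of cosets of the normalizer of $HU$. These sets form a directed system under reverse inclusion of $U$, so by compactness their intersection is nonempty. Any $g$ in the intersection satisfies $H^g\subseteq \bigcap_U H'U = H'$, and symmetrically $H'\subseteq H^g$. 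Hence $H^g=H'$.

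\textbf{Step 2: the cohomological route.} Since the finite results are phrased with $N$ nilpotent (not merely pronilpotent after quotienting, where it is fine), the real subtlety is solvability of $G/U$ when only $G/N$ is pronilpotent. To avoid it, I would argue directly. Write $N=\prod_p N_p$ with each $N_p$ characteristic, hence normal in $G$. Conjugacy classes of supplements of $N$ in $HN$ relate to $H^1(H, N)$-type nonabelian cohomology when $H\cap N$ is handled, and the conjugating element can be taken in $N$. The primary decomposition $H^1(H,N)\cong\prod_p H^1(H,N_p)$ then lets me conjugate prime by prime. By local conjugacy, the class in $H^1(H,N_p)$ restricts trivially to a Sylow $p$-subgroup, and under either hypothesis restriction to the Sylow $p$-subgroup is injective on the $p$-component. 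For $G/N$ pronilpotent, the $p'$-part of $H$ acts coprimely modulo terms centralizing $N_p$. For supersolvable $G$, I would filter $N_p$ by $G$-chief factors of order $p$, use a Gaschütz-type argument on each factor, and pass to limits.

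\textbf{Main obstacle.} The hardest step is the injectivity of restriction to Sylow subgroups for the nonabelian set $H^1(H,N_p)$ in the supersolvable case. I would attack it by induction along a $G$-invariant series of $N_p$ with cyclic factors, twisting the coefficients at each stage, and then taking the inverse limit over open normal subgroups to handle the profinite setting.
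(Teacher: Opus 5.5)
\textbf{The case $G/N$ pronilpotent is fine; the prosupersolvable case has a gap as written, though your own discarded idea closes it.}

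Your Step~1 already settles the case $G/N$ pronilpotent. For open $U\normal G$, the finite group $G/U$ is an extension of the nilpotent group $NU/U$ by the nilpotent group $G/NU$, so it is automatically solvable. Hence Losey--Stonehewer (A) applies to $G/U$ directly, and your compactness argument with the sets $C_U$ is correct. The worry about solvability, the appeal to Shin, and Step~2 are all unnecessary there.

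The prosupersolvable case is not proved as written. Condition (C) does not follow from supersolvability, since every finite $p$-group is supersolvable. You also do not identify any concrete supersolvable result in Shin's work to invoke. Your fallback, Step~2, only asserts the decisive fact: that restriction $H^1(J,N_p)\to H^1(J_p,N_p)$ is injective when $N_pJ$ is supersolvable. That is exactly the hard part.
\begin{itemize}
\item The $GL(2,3)$ example shows this injectivity fails for supersolvable $J$ once $NJ$ is not supersolvable.
\item In the paper it is the injectivity half of Lemma~\ref{lem:loc_conj}. It needs the Zorn-minimality argument, the Hall $\pi$-subgroup reduction of Proposition~\ref{prop:res_pi}, and the twisted inflation--restriction argument of Proposition~\ref{prop:coprime_iso}.
\item Your chief-series sketch stops at the crux. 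After making the cocycle take values in a central $Z$ of order $p$, you still need that a class in $H^1(J,Z)$ whose restriction to $J_p$ comes from $(N_p/Z)^{J_p}$ already comes from $(N_p/Z)^{J}$. Nothing you say addresses this.
\item Step~2 also leaves the passage from supplements to complements (``when $H\cap N$ is handled'') undone. The paper needs the Losey--Stonehewer reductions to $N$ a $p$-group and $G=\langle H,H'\rangle$ for this.
\end{itemize}

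\textbf{The fix.} The Fitting-subgroup idea you discarded in Step~1 closes the gap immediately. In the finite supersolvable group $\bar G=G/U$, the nilpotent normal subgroup $\bar N=NU/U$ lies in the Fitting subgroup $F(\bar G)$. Moreover $\bar G/F(\bar G)$ is abelian, because $\bar G'$ is nilpotent. Contrary to your concern, enlarging $N$ to $F(\bar G)$ does not change the supplements. The groups $\bar H=HU/U$ and $\bar H'=H'U/U$ are still locally conjugate supplements of $F(\bar G)$. So (A) applied to $F(\bar G)$ makes them conjugate in $\bar G$, and your compactness argument finishes.

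\textbf{Comparison with the paper.} With this repair, your proof reduces both cases to the finite theorem (A) over the finite quotients $G/U$. In particular, it shows the prosupersolvable case is a formal consequence of (A). The paper instead proves the finite-$N$ case for arbitrary profinite $G$ from scratch. It reduces supplements to complements, invokes its primary-type decomposition of nonabelian $H^1$, and then takes the limit over $G/(N\cap U)$. Your route is much cheaper but treats (A) as a black box. The paper's route is self-contained and produces the cohomological decomposition, including surjectivity onto invariant classes, which is of independent interest.
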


Our proof depends in part on a primary-type decomposition of the first cohomology. For a profinite group $J$, a discrete topological group $N$ is a $J$-group if $J$ operates continuously on it. In this context, cocycles are continuous maps $\phi: J \to N$ satisfying
$\phi(jj') = \phi(j) \phi(j')^{j^{-1}}$ for all $j,j' \in J$. Two cocycles $\phi$ and $\phi'$ are cohomologous if there exists $n \in N$ such that
$\phi'(j) = n^{-1} \phi(j) n^{j^{-1}}$ for all $j\in J$; we then write $\phi\sim\phi'$. The first cohomology $H^1(J,N)$ is the
set $Z^1(J,N)$ of cocycles modulo this equivalence relation with a distinguished point corresponding to the class containing the map taking each
element of $J$ to the identity of $N$.

When $N$ is abelian, $H^1(J,N)$ is torsion abelian and so admits a primary decomposition~\cite[Cor. 6.7.6]{Rib10}, $H^1(J,N) \iso \textstyle \oplus_{p\in \pi(J)} H^1(J,N)_p$, where $\pi(J)$ denotes the set of prime divisors of $\abs{J}$ and $H^1(J,N)_p$ is the $p$-primary component of $H^1(J,N)$ for each prime $p\in \pi(J)$. If additionally $J$ is finite~\cite[Thm.~III.10.3]{Bro82}, we have
\begin{equation}
	\label{eq:ab_decomp}
	H^1(J,N) \iso \textstyle \oplus_{p\in \pi(J)} \inv_J H^1(J_p,N),
\end{equation}
where $J_p$ is a Sylow $p$-subgroup of $J$ and $\inv_J H^1(J_p,N) \iso H^1(J,N)_p$ denotes the set of $J$-invariant elements in $H^1(J_p,N)$ for each prime $p$ (see \S\ref{ss:note} for details). The map $\phi \mapsto \oplus_{p\in \pi(J)} \phi|_{J_p}$ given by the direct sum of the restrictions induces the isomorphism~\eqref{eq:ab_decomp}.

For nonabelian $N$, such a decomposition cannot exist in general, even as an isomorphism of pointed sets. Losey and Stonehewer~\cite[Sec.~3]{Los79} provide the example of $J=S_3$ operating on $N=Q_8$ as in $GL(2,3)$. In this case, there is a complement $J'$ to $N$ that is locally conjugate but not conjugate to $J$; $H^1(J,N)$ has order two while $H^1(J_p,N)$ is trivial for each Sylow $p$-subgroup $J_p$ of $J$. Thus, even for finite groups, requiring $J$ to be solvable or even supersolvable is not sufficient. However, with additional restrictions on $J$ and $N$, we can recover a primary-type decomposition for nonabelian $N$:

\begin{lemma}
	\label{lem:loc_conj}
	For a profinite group $J$ and a finite nilpotent $J$-group $N$, if either $NJ$ is prosupersolvable or $J$ is pronilpotent, the map $\phi \mapsto \times_{p\in\pi(J)} \phi|_{J_p}$ induces an isomorphism $H^1(J,N) \iso \times_{p \in \pi(J)} \inv_J H^1(J_p, N)$ of pointed sets, where  $J_p \in \Syl_p(J)$ for each $p \in \pi(J)$.
\end{lemma}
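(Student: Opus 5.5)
We translate cocycles into complements and argue with semidirect products. Since $N$ is finite and the action is continuous, the kernel $K$ of the action of $J$ on $N$ is open. Every cocycle is trivial on some open normal subgroup $U\le K$. Hence $H^1(J,N)=\varinjlim H^1(J/U,N)$ over open normal $U\le K$, and similarly for each $J_p$, where $J_pU/U$ is a Sylow $p$-subgroup of $J/U$. The hypotheses pass to the finite quotients $N\rtimes J/U$. First we check that restriction lands in the invariants and is compatible with these limits: for $j\in J$, the conjugate of $\phi|_{J_p}$ by $j$ equals $\phi|_{J_p}$ up to the coboundary coming from $\phi(j)$. Given that, it suffices to treat finite $J$, because a bijection at each finite level passes to the direct limit. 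We also reduce to finitely many primes, since only $\pi(J/U)$ is relevant at each level.

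For finite $J$, set $G=N\rtimes J$. Classes in $H^1(J,N)$ correspond to $N$-conjugacy classes of complements $J'$ of $N$ in $G$, via $J'=\{\phi(j)j\}$. Similarly, $H^1(J_p,N)$ classifies $N$-classes of complements of $N$ in $NJ_p$. A class in $H^1(J_p,N)$ is $J$-invariant exactly when its complement $P'$ is normalized modulo $N$-conjugacy by $J$, that is, when $J\le N_G(P')N$.

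Injectivity says the following: if two complements $J',J''$ have Sylow $p$-subgroups $N$-conjugate for every $p$ (compatibly with the identification to $J$), then $J'$ and $J''$ are $N$-conjugate. Surjectivity says that a family of invariant $p$-complement classes $P'_p$ can be assembled into one complement $J'$ with $J'\cap NJ_p$ conjugate to $P'_p$.

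We prove both statements by induction on $\abs{N}$, using a minimal normal subgroup $M$ of $G$ contained in $N$. Since $N$ is nilpotent, we can take $M\le Z(N)$ elementary abelian of exponent $q$. In the supersolvable case $\abs{M}=q$; in the pronilpotent case $J$ centralizes or acts through a nilpotent group. The exact sequence $1\to M\to N\to N/M\to 1$ of $J$-groups gives the nonabelian long exact sequence
\[
H^0(J,N/M)\to H^1(J,M)\to H^1(J,N)\to H^1(J,N/M).
\]
The analogous sequences for each $J_p$ are mapped to it by restriction. Induction gives the statement for $N/M$. The abelian decomposition \eqref{eq:ab_decomp} handles $M$, applied also to the twisted coefficient groups $M$ arising from each fixed cocycle, whose fibres are $H^1(J,{}_\phi M)$ modulo the action of $H^0(J,{}_\phi(N/M))$. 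A diagram chase then reduces both statements to one claim: the orbits of $H^0$ on the fibres match up prime by prime.

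This orbit comparison is the main obstacle, and it is exactly where the $Q_8\rtimes S_3$ example fails. The hypotheses should enter here as follows. In the pronilpotent case, $J=\prod J_p$ and $N=\prod N_r$, so the fixed points $C_{N/M}(J)$ decompose as products. Moreover, for $p\neq q$, $H^1(J_p,M)=0$ by coprimality, so only the $q$-component matters; it sees only $J_q$, and $J_q$-fixed points suffice.

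In the supersolvable case, $\abs{M}=q$, so $J$ acts on $M$ by a character, and ${}_\phi M$ is again one-dimensional. Then $H^1(J,{}_\phi M)$ is nonzero only through the $q$-part. The image of $H^0(J,N/M)$ acting on it can be computed from $H^0(J_q,N/M)$ together with invariance, using a Frattini argument: $J=N_J(J_q)\cdot$ (stuff acting trivially on $M$).

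I would first try to verify this orbit equality directly for $\abs{M}=q$, via a transfer/corestriction argument of index $[J:J_q]$ prime to $q$. This averaging is what invertibility of $[J:J_q]$ on the abelian group $M$ allows, and it exactly upgrades $J$-invariance to genuine $J$-fixedness.
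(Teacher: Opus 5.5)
Your proposal has a genuine gap. It reduces both injectivity and surjectivity to the claim that the orbits of $H^0(J,{}_\phi(N/M))$ on the fibres $H^1(J,{}_\phi M)$ match, prime by prime, the orbits at the level of the $J_p$. It then leaves this claim unproved.

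Worse, the mechanism you suggest for the supersolvable case cannot work. In $G=GL(2,3)=Q_8\semi S_3$ take $M=Z(Q_8)$.
\begin{itemize}
\item $M$ is the minimal normal subgroup of $G$ inside $N$, and $\abs{M}=q=2$.
\item $S_3$ centralizes $M$, so your Frattini remark is vacuous, and $[S_3:C_2]=3$ is prime to $2$.
\item Yet $H^0(S_3,N/M)=1$, so $H^1(S_3,M)\iso C_2$ splits into two orbits.
\item An involution $t$ inverts some $i\in Q_8$ of order $4$. Hence $\bar i\in C_{N/M}(t)$ with $\delta(\bar i)\neq 0$, and $H^0(C_2,N/M)$ acts transitively on $H^1(C_2,M)\iso C_2$.
\end{itemize}
So the orbit comparison fails under exactly the hypotheses your transfer argument uses. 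Because $N=Q_8$ is nonabelian, $\delta$ does not commute with corestriction: $\operatorname{cor}\delta(\bar i)\neq 0$, while the norm of $\bar i$ in $N/M\iso C_2^2$ is trivial. Supersolvability therefore has to be used on all of $N$, not merely through $\abs{M}=q$; in $GL(2,3)$ it fails because $N/M$ is an irreducible $S_3$-module of rank $2$.

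In the pronilpotent case, ``$J_q$-fixed points suffice'' is likewise only asserted. From $\bar n\in C_{N/M}(J_q)$ you must produce an element of $C_{N/M}(J)$ with the same effect on the fibre. That requires a fixed-point argument for $J_{q'}$ acting on a coset of a $q$-group.

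Surjectivity has an additional hole. Your sequence stops at $H^1(J,N/M)$, so it cannot show that the class supplied by induction lifts to $H^1(J,N)$. You would need the obstruction in $H^2(J,M)$, or Gasch\"utz's theorem. Even then, the lift must be corrected in the $q$-fibre, which is the same orbit problem again.

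The missing idea, which is how the paper proceeds, is to upgrade invariance to fixedness by counting, one prime at a time, rather than by transfer.
\begin{itemize}
\item After splitting $N=\times_p N_p$, one compares $H^1(J,N_p)$ with $\inv_J H^1(J_p,N_p)$.
\item One descends from $J$ to $J_p$ through normal subgroups $J_0$ of prime index $p_0\neq p$, taking a minimal counterexample via Zorn's lemma.
\item Write $J=J_0Q$ with $Q$ a procyclic $p_0$-group. Then $Q$ permutes the set of cocycles cohomologous to a $J$-invariant $\phi\in Z^1(J_0,N_p)$. This is a finite set of $p$-power size, so $Q$ fixes one of them.
\item A $Q$-fixed cocycle, which is automatically trivial on $J_0\cap Q$, extends to $J$ (Props.~\ref{prop:fixed_cocycle} and~\ref{prop:coprime_iso}).
\end{itemize}
The hypotheses are what make this descent available. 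If $J$ is pronilpotent, or $NJ$ is prosupersolvable and $p$ is the largest prime, then $J_p\normal J$. Otherwise the normal Hall subgroup of $J$ for the primes exceeding $p$ centralizes $N_p$, so restriction to a complementary Hall subgroup is already bijective (Prop.~\ref{prop:res_pi}). In $GL(2,3)$ with $p=2$ both routes are blocked: $S_3$ has no normal subgroup of index $3$, and $A_3$ does not centralize $Q_8$.
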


We also provide an inclusion version of the local conjugacy result for profinite semidirect products:

\begin{corollary}
	\label{thm:loc_inclusion}
	Let $H$ be a closed subgroup of a profinite semidirect product $G=N \semi J$ where $N$ is pronilpotent and either $G$ is prosupersolvable or $J$ is pronilpotent. If $N\cap H \normal N$ and $H$ contains a conjugate of some Sylow $p$-subgroup of $J$ for each prime $p$, then $H$ contains a conjugate of $J$.
\end{corollary}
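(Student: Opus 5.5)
The idea is to reduce to Theorem~\ref{thm:loc_conj}, applied inside a suitable subgroup of $G$. Put $M = N\cap H$, so $M \normal N$. It is also normalized by $H$, since $N \normal G$. Hence $M$ is normalized by $NH$. Set $K = NH$, a closed subgroup of $G$ containing $N$.

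First we show that $K = G$. For each prime $p$ we have $J_p^{g_p} \le H$ for some $g_p\in G$. Since $G = NJ$, we may write $g_p = n_p j_p$, and then $J_p^{g_p} \le H$ gives $J_p^{j_p} \le N^{j_p} H = NH = K$. Because $j_p\in J$, the subgroup $J\cap K$ contains a Sylow $p$-subgroup of $J$ for every $p$. A closed subgroup of a profinite group containing a Sylow $p$-subgroup for every $p$ is the whole group, since its index is a supernatural number coprime to every $p$. Therefore $J \le K$, and so $K = NJ = G$, that is, $G = NH$.

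Next we pass to $\bar G = G/M$, which we may do because $M \normal NH = G$. In $\bar G$ the subgroup $\bar N = N/M$ is pronilpotent and normal, and $\bar H = H/M$ is a closed supplement of $\bar N$ with $\bar N \cap \bar H = 1$, so it is a complement. Also $\bar J$ is a supplement of $\bar N$, and $\bar G$ remains prosupersolvable, respectively $\bar G/\bar N \iso J/(J\cap N) = J$ remains pronilpotent, because these properties pass to quotients.

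We now check that $\bar H$ and $\bar J$ are locally conjugate. Fix $p$. Then $\bar J_p^{\bar g_p} \le \bar H$. We would like this image to be a Sylow $p$-subgroup of $\bar H$, but $\bar J$ need not be a complement, since $\bar J \cap \bar N = (J\cap N)M/M$ and $J\cap N = 1$. In fact $\bar J\cap\bar N = \overline{(JM\cap N)} = \overline{M(J\cap N)} = 1$ by Dedekind's law, so $\bar J$ is also a complement of $\bar N$. Thus $\bar H \iso \bar G/\bar N \iso \bar J$ via the projections. Under this isomorphism $\bar J_p^{\bar g_p}$ maps onto a Sylow $p$-subgroup of $\bar G/\bar N$, so it is a Sylow $p$-subgroup of $\bar H$ (a Sylow subgroup of a complement is recognized through the projection). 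Hence $\bar J_p^{\bar g_p}$ is a Sylow $p$-subgroup of $\bar H$ that is conjugate to a Sylow $p$-subgroup of $\bar J$, and local conjugacy holds.

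Theorem~\ref{thm:loc_conj} now gives $\bar J^{\bar g} = \bar H$ for some $g$. Taking preimages, $J^g \le J^g M = H$.

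The main subtlety is the first step, $NH=G$. It uses only the Sylow containment together with the fact that a closed subgroup containing Sylow subgroups for all primes is everything, applied within $J$. The rest is bookkeeping on quotients, where the key points are that $M$ is normal in $G$ and that the hypotheses are preserved in $\bar G$.
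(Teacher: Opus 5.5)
Your proof is correct and follows the same route as the paper: show $NH=G$ from the Sylow containments, deduce $N\cap H\normal G$, check that $H/(N\cap H)$ and $J(N\cap H)/(N\cap H)$ are locally conjugate complements of $N/(N\cap H)$, apply Theorem~\ref{thm:loc_conj}, and pull back. Two cosmetic slips: $J_p^{j_p}\le NH$ is cleanest seen in $G/N$, since $g_pN=j_pN$, and the sentence saying $\bar J$ ``need not be a complement'' should be dropped, since you then prove that it is one.
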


This in turn allows us to give a fixed-point result for non-coprime actions:
\begin{corollary}
	\label{cor:fix_pt}
	Suppose a profinite semidirect product $G=N \semi J$ acts transitively on some nonempty set $\Omega$ with closed point stabilizers $\{G_\alpha\}_{\alpha\in\Omega}$, where $N$ is pronilpotent and either $G$ is prosupersolvable or $J$ is pronilpotent. If $N_\alpha \normal N$ for some $\alpha\in\Omega$, and for each prime $p$, a Sylow $p$-subgroup of $J$ fixes an element of $\Omega$, then $J$ fixes an element of $\Omega$.
\end{corollary}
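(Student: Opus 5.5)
The plan is to reduce Corollary~\ref{cor:fix_pt} to Corollary~\ref{thm:loc_inclusion} applied to the stabilizer $H = G_\alpha$. Fix $\alpha \in \Omega$ with $N_\alpha = N \cap G_\alpha \normal N$. By hypothesis $G_\alpha$ is a closed subgroup of $G = N \semi J$, and $N$ is pronilpotent with either $G$ prosupersolvable or $J$ pronilpotent. So the structural hypotheses of Corollary~\ref{thm:loc_inclusion} hold for $H = G_\alpha$, including $N \cap H \normal N$.

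Next I translate the Sylow fixed-point hypothesis into the containment hypothesis. For each prime $p$, pick $J_p \in \Syl_p(J)$ that fixes some $\beta \in \Omega$, so $J_p \le G_\beta$. Since $G$ acts transitively, there is $g \in G$ with $\beta = \alpha g$ (or $g\alpha$, depending on the side convention). Then $G_\beta = G_\alpha^{g}$ is a conjugate of $G_\alpha$, and $J_p^{g^{-1}} \le G_\alpha$. Thus $G_\alpha$ contains a conjugate of a Sylow $p$-subgroup of $J$ for each $p$. For primes not dividing $\abs{J}$, the Sylow subgroup is trivial and there is nothing to check. Corollary~\ref{thm:loc_inclusion} now gives $x \in G$ with $J^x \le G_\alpha$.

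Finally, $J \le G_\alpha^{x^{-1}} = G_{\gamma}$, where $\gamma$ is the image of $\alpha$ under $x^{-1}$ in the appropriate convention. Hence $J$ fixes $\gamma \in \Omega$.

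The only delicate points are bookkeeping. First, the conjugation and action conventions must be consistent, so that stabilizers of points in one orbit are exactly the $G$-conjugates of $G_\alpha$. Second, the normality hypothesis is only assumed at the single point $\alpha$, so the argument must be anchored at $\alpha$ rather than at $\beta$. Neither point is a genuine obstacle, since all the substantive work lies in Corollary~\ref{thm:loc_inclusion}.
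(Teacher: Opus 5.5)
Your proposal is correct and follows the paper's proof essentially verbatim. Both apply Corollary~\ref{thm:loc_inclusion} to the closed subgroup $H=G_\alpha$, where $N\cap G_\alpha = N_\alpha \normal N$, use transitivity to conjugate each Sylow $p$-subgroup of $J$ into $G_\alpha$, and conclude from $J^x\le G_\alpha$ that $J$ fixes the corresponding translate of $\alpha$. The paper also notes that the conjugating elements may be taken in $N$, since $G=NJ$ and $J$-conjugates of $J_p$ are again Sylow in $J$, but Corollary~\ref{thm:loc_inclusion} does not need this.
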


Glauberman proved that this result holds whenever $N$ acts transitively and the orders of $N$ and $J$ are coprime, without any further restrictions on $N$ or $J$~\cite[Thm.~4]{Gla64}. Previous studies considered non-coprime actions in finite groups~\cite{Bur25a,Bur25b}. We stress that the hypothesis $N\cap H \normal N$ in Cor.~\ref{thm:loc_inclusion} is necessary. Consider the cyclic group $J=C_6$ acting on the Heisenberg group $N$ of order 27 as in $C_3\wr S_3$. Then $J$ and $N$ are nilpotent, $G\iso N \semi J$ is supersolvable of order 162, and there exists a subgroup $H\iso C_3\times S_3$ of order 18 that contains a conjugate of some Sylow $p$-subgroup of $J$ for each prime $p$ but not a conjugate of $J$.

\subsection{Outline}
We proceed as follows. In the remainder of this section, we review notation. In Section~\ref{s:decomp}, we present the primary-type decomposition given in Lemma~\ref{lem:loc_conj}. We then prove Theorem~\ref{thm:loc_conj} and Corollary~\ref{thm:loc_inclusion} in Section~\ref{s:local_coh}. We conclude in Section~\ref{s:fixed} with a proof of Corollary~\ref{cor:fix_pt} and some final remarks.

\subsection{Preliminaries}
\label{ss:note}

We write $H\leq G$ (resp. $H\normal G$) to denote that $H$ is a closed  (resp. closed normal) subgroup of $G$ and otherwise use standard notation for profinite groups as found in Ribes and Zalesskii~\cite{Rib10} or Wilson~\cite{Wil98}. Given a profinite group $J$ and a discrete $J$-group $N$, we can consider the pointed set $H^1(J,N)$ defined above. As in the case that $N$ is abelian, this set lies in bijective correspondence with $N$-conjugacy classes of compact complements to $N$ in $NJ$.
For $K\leq J$, we let $\res^J_K: H^1(J,N) \to H^1(K,N)$ denote the map induced by restricting elements of $Z^1(J,N)$ to $K$.

For $j\in J$ and $\phi\in Z^1(K,N)$, we define $\phi^j \in Z^1(K^{j^{-1}},N)$ by
\begin{equation}
	\label{eq:cocycle_act_def}
	\phi^j(x) = \phi(x^j)^{j^{-1}}
\end{equation}
for $x\in K^{j^{-1}}$. We say that $\phi$ is $J$-invariant if $\phi|_{K\cap K^{j^{-1}}} \sim \phi^j|_{K\cap K^{j^{-1}}}$ for all $j\in J$ and let $\inv_J H^1(K,N)$ denote the classes of $J$-invariant elements. Note that restricted elements are $J$-invariant, i.e. $\res^J_K H^1(J,N) \subseteq \inv_J H^1(K,N)$, as for any $\phi \in Z^1(J,N)$, we have $\phi^j(x) = n^{-1} \phi(x) n^{x^{-1}}$ for all $x\in J$ where $n=\phi(j)$.

For fixed $\phi \in Z^1(J,N)$, let $N_\phi$ denote the topological group $N$ with twisted $J$-action,
\begin{equation}
	\label{eq:N_twisted}
	n^{j^{-1}}_\phi = \phi(j) n^{j^{-1}} \phi(j)^{-1}
\end{equation}
for $j\in J$. Then, for any $\psi \in Z^1(J,N)$, $\psi\phi^{-1} \in Z^1(J, N_\phi)$. In particular, the map $[\psi] \mapsto [\psi\phi^{-1}]$ yields a bijection between $H^1(J,N)$ and $H^1(J,N_\phi)$ that sends $[\phi ]$ to the distinguished point. For further details on nonabelian group cohomology, see Serre~\cite[Sec.~I.5]{Ser02}.

\section{Primary-type decomposition}
\label{s:decomp}

In this section, we prove Lemma~\ref{lem:loc_conj}. We begin with a useful proposition:

\begin{proposition}
	\label{prop:fixed_cocycle}
	For a profinite group $J$ and a locally finite, discrete $J$-group $N$ that is also a $p$-group, suppose $Q\leq J$ is a procyclic $p_0$-group for some prime $p_0\ne p$ and that $J_0 \normal J$. If $\phi \in Z^1(J_0, N)$ is $Q$-invariant, then there exists $\psi \in Z^1(J_0,N)$ with $\psi \sim \phi$ such that $\psi^q = \psi$ for all $q \in Q$. Furthermore, $\psi(q)=1$ for all $q\in J_0\cap Q$.
\end{proposition}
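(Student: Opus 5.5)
The plan is to let $Q$ act on the set of cocycles cohomologous to $\phi$ and find a $Q$-fixed point by a coprime counting argument of Glauberman type. Since $J_0 \normal J$, for every $q\in Q$ we have $J_0^{q^{-1}} = J_0$, so by \eqref{eq:cocycle_act_def} the rule $\phi\mapsto\phi^q$ is an action of $Q$ on $Z^1(J_0,N)$. Put $X=\{\psi\in Z^1(J_0,N) : \psi\sim\phi\}$. By $Q$-invariance $\phi^q\sim\phi$ for all $q$, and the action is compatible with the twisted $N$-action $\psi\mapsto n^{-1}\psi(\cdot)\,n^{(\cdot)^{-1}}$; hence $X$ is $Q$-stable. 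The group $N$ acts transitively on $X$, and $Q$ normalizes this action in the sense that $(n\cdot\psi)^q=n^{q^{-1}}\cdot\psi^q$. So $N\semi Q$ acts on $X$ with $N$ transitive, and it suffices to find a point of $X$ fixed by $Q$.

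First I reduce to a finite situation. Since $\phi$ is continuous, $J_0$ is compact and $N$ is discrete, $\phi$ takes finitely many values and factors through an open normal subgroup of $J_0$. For each $q\in Q$, the action of $J$ on the discrete set $N$ is continuous, so the stabilizer of $\phi$ in $Q$ is open. Hence the $Q$-orbit of $\phi$ is finite, of $p_0$-power size. Let $N_1$ be the subgroup generated by all values of the finitely many cocycles $\phi^q$ and by elements $n_q$ realizing $\phi^q\sim\phi$, closed up under the (finite, on these finitely many elements) $J_0Q$-action. Because $N$ is locally finite, $N_1$ is a finite $p$-group. Let $X_1$ be the $N_1$-orbit of $\phi$. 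Then $X_1$ is $Q$-stable, $Q$ acts on it through a finite $p_0$-quotient, and $\abs{X_1}=[N_1:\mathrm{Stab}]$ is a power of $p$.

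Now fix a topological generator $q_0$ of $Q$. The group $N_1\langle q_0\rangle$ acts on $X_1$ through a finite quotient in which the image of $q_0$ has $p_0$-power order. The $\langle q_0\rangle$-orbits have $p_0$-power length, while $\abs{X_1}$ is a $p$-power with $p\neq p_0$. If $\abs{X_1}>1$ this forces a fixed point; more cleanly, the standard Glauberman lemma (transitive normal subgroup $N_1$, coprime cyclic complement) gives $\psi\in X_1$ with $\psi^{q_0}=\psi$. The set of $q$ fixing $\psi$ is a closed subgroup containing $q_0$, so it is all of $Q$. I expect this step to be the main obstacle: the finiteness reduction must be arranged so that $X_1$ really is $Q$-stable and is acted on transitively by a finite $p$-group.

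Finally, for $q\in J_0\cap Q$, the identity noted in \S\ref{ss:note} gives $\psi^q(x)=n^{-1}\psi(x)n^{x^{-1}}$ with $n=\psi(q)$. From $\psi^q=\psi$ we get $n=\psi(x)n^{x^{-1}}\psi(x)^{-1}$ for all $x\in J_0$. Taking $x=q$ yields $n^{q^{-1}}=n$, and then the cocycle identity gives $\psi(q^k)=n^k$ for all $k$. By continuity, $q^{p_0^a}\to 1$ forces $n^{p_0^a}=1$ for large $a$. Since $n$ lies in the $p$-group $N$ and $p\neq p_0$, it follows that $n=1$, that is, $\psi(q)=1$.
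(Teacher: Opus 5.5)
Your proposal is correct and follows essentially the same route as the paper: build a finite $J$-stable $p$-subgroup of $N$, let $q_0$ permute the resulting $p$-power-size set of cocycles cohomologous to $\phi$, get a fixed point by coprime orbit counting, extend to all of $Q$ by density, and then use the $p$ versus $p_0$ clash on $J_0\cap Q$. The differences are cosmetic. The paper needs only the single element $n_0$ with $\phi^{q_0}=\phi_{n_0}$, so your preliminary finiteness of the $Q$-orbit and the Glauberman-lemma remark are unnecessary. For the last claim, the paper uses that $\psi|_{J_0\cap Q}$ is a homomorphism (since $Q$ is abelian) whose image is both a $p$-group and a $p_0$-group, which is equivalent to your $\psi(q^k)=\psi(q)^k$ continuity argument.
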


\begin{proof}
	For $n\in N$, define $\phi_n \in Z^1(J_0,N)$ by $\phi_n(x) = n^{-1}\phi(x)n^{x^{-1}}$ for $x \in J_0$.
	Let $q_0$ generate $Q$ and $n_0\in N$ satisfy $\phi^{q_0} = \phi_{n_0}$. As $J_0$ is compact, $\phi(J_0)$ is finite, and so is
	\begin{equation}
		N_0 = \langle \nu^j: \nu \in \phi(J_0) \cup \{n_0\}, j\in J \rangle,
	\end{equation}
	by the local finiteness of $N$ and the fact that each $J$ orbit is finite as $J$ is compact. Letting $N_1=\{n \in N_0: \phi_n = \phi\}\leq N_0$, the set $S=\{\phi_n\}_{n\in N_0}$ of distinct cocycles cohomologous to $\phi$ via some $n\in N_0$ has cardinality $[N_0:N_1]$, a finite power of $p$. The element $q_0$ permutes the finite set $S$ under the continuous map $q \cdot \phi_n= (\phi_n)^q$ as in~\eqref{eq:cocycle_act_def}. As $S$ is closed in $N^{J_0}$, the fact that $\langle q_0\rangle$ is dense in $Q$ implies $Q$ acts on $S$. Orbits of this action have cardinality 1 or some power of $p_0$. However, $p_0\ne p$, so there must exist some fixed point $\psi$ satisfying $\psi^q = \psi$ for all $q\in Q$. Restricting to $Q$ makes $\psi$ a continuous homomorphism, i.e. $\psi(q_1q_2)=\psi(q_1)\psi(q_2)$ for $q_1,q_2\in J_0 \cap Q$. In particular, the image of $\psi|_{J_0 \cap Q}$ is both a finite $p$-group and a $p_0$-group, and thus trivial.
\end{proof}

With this, we may now show:

\begin{proposition}
	\label{prop:coprime_iso}
	For a prosolvable group $J$ and a locally finite, discrete $J$-group $N$ that is also a $p$-group, suppose $J_0 \normal J$ has prime index $p_0 \ne p$. Then $\res^J_{J_0}: H^1(J, N) \xrightarrow\iso \inv_J H^1(J_0, N)$ is an isomorphism.
\end{proposition}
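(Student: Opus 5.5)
We prove that $\res^J_{J_0}$ lands in $\inv_J H^1(J_0,N)$, which was already observed in \S\ref{ss:note}, and that it is injective and surjective onto that set. The two key inputs are Proposition~\ref{prop:fixed_cocycle} and the fact that $J/J_0$ has prime order $p_0\ne p$. Since $J/J_0$ is cyclic of order $p_0$, we pick $j\in J\setminus J_0$ and let $Q$ be a Sylow $p_0$-subgroup of the closure $\overline{\langle j\rangle}$. This $Q$ is procyclic and $p_0$-primary. Because $j$ has image of order $p_0$ in $J/J_0$, so does its $p_0$-part, so $QJ_0=J$. (Prosolvability is not really needed here; it just matches the setting of the paper.)

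\emph{Surjectivity.} Let $[\phi]\in\inv_J H^1(J_0,N)$. Then $\phi$ is in particular $Q$-invariant, so Proposition~\ref{prop:fixed_cocycle} gives $\psi\sim\phi$ with $\psi^q=\psi$ for all $q\in Q$ and $\psi|_{J_0\cap Q}=1$. We extend $\psi$ to $J=J_0Q$ by
\[
\tilde\psi(xq)=\psi(x)
\]
for $x\in J_0$ and $q\in Q$. This is well defined: if $xq=x'q'$, then $x'^{-1}x=q'q^{-1}\in J_0\cap Q$, and the cocycle identity together with $\psi(J_0\cap Q)=1$ gives $\psi(x)=\psi(x')$. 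Next we check the cocycle identity for $\tilde\psi$. We have $\tilde\psi(xq\,x'q')=\tilde\psi(x\,(qx'q^{-1})\,qq')=\psi(x)\,\psi(qx'q^{-1})^{x^{-1}}$. The invariance $\psi^{q^{-1}}=\psi$ gives $\psi(qx'q^{-1})=\psi(x')^{q^{-1}}$, so $\tilde\psi(xq\,x'q')=\psi(x)\,\psi(x')^{(xq)^{-1}}$, which is exactly $\tilde\psi(xq)\,\tilde\psi(x'q')^{(xq)^{-1}}$. Continuity holds because $J_0\times Q\to J$ is a continuous surjection of compact spaces, hence a quotient map. Thus $\res[\tilde\psi]=[\psi]=[\phi]$.

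\emph{Injectivity.} This is the main obstacle. Suppose $\alpha,\beta\in Z^1(J,N)$ have cohomologous restrictions to $J_0$. We twist by $\alpha$, passing to $N_\alpha$ as in~\eqref{eq:N_twisted}. This twisted module is still a locally finite $p$-group $J$-group, and the twist commutes with restriction, so we may assume $\alpha=1$ and $\beta|_{J_0}\sim 1$. After replacing $\beta$ by a cohomologous cocycle we get $\beta|_{J_0}=1$. The cocycle identity then shows that $\beta$ factors through $J/J_0\cong C_{p_0}$, with values in the fixed points $N^{J_0}$. The resulting cocycle of $C_{p_0}$ takes values in a finite $J$-stable $p$-subgroup, namely the one generated by the finitely many $J$-conjugates of $\beta(J)$; this subgroup is finite by local finiteness. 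Coprime action then gives $H^1(C_{p_0},M)=1$ for a finite $p$-group $M$. For instance, the complements of $M$ in $M\rtimes C_{p_0}$ are Sylow $p_0$-subgroups and hence conjugate, which is Schur--Zassenhaus. So $\beta\sim 1$ via some $m\in M$. The point to check is that $\beta$ is a cocycle on $J/J_0$ with values in $M\cap N^{J_0}$. Conjugation by an element of $M$ fixed by $J_0$ keeps the restriction to $J_0$ trivial, and $M\cap N^{J_0}$ is a $J/J_0$-stable finite $p$-group, so the vanishing of $H^1$ can be applied there. Hence $\beta\sim\alpha$ on $J$, which proves injectivity.
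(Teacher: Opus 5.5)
Your proof is correct and follows the paper's route: injectivity by twisting by a cocycle and using (an unpacked form of) the inflation--restriction sequence with coprime vanishing of $H^1(J/J_0,N^{J_0})$, and surjectivity via Proposition~\ref{prop:fixed_cocycle} and the extension $\tilde\psi(xq)=\psi(x)$. Your refinements fill in points the paper leaves implicit: you take $Q$ to be the Sylow $p_0$-subgroup of $\overline{\langle j\rangle}$, so prosolvability is not needed; you check that $\tilde\psi$ is continuous; and you reduce the coprime vanishing to a finite $J$-stable subgroup.
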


\begin{proof}
	For $\phi,\psi \in Z^1(J,N)$, suppose $\phi|_{J_0}\sim \psi|_{J_0}$. In the inflation-restriction exact sequence~\cite[\S{}I.5.8]{Ser02} on $N_\phi$ as in~\eqref{eq:N_twisted},
	\begin{equation*}
		1 \to H^1(J/J_0, N_\phi^{J_0}) \to H^1(J, N_\phi) \xrightarrow{\res^J_{J_0}} \inv_J H^1(J_0, N_\phi),
	\end{equation*}
	$H^1(J/J_0, N_\phi^{J_0})$ is trivial so that $\res^J_{J_0}$ maps only the distinguished point to the distinguished point. In particular, $[\psi\phi^{-1}]$ is trivial in $H^1(J, N_\phi)$ so that $\phi \sim \psi $. Consequently, $\res^J_{J_0}$ is injective.

	To show that $\res^J_{J_0}$ is also surjective, suppose $\phi \in Z^1(J_0, N)$ is $J$-invariant.  Write $J=J_0Q$ for some procyclic $p_0$-group $Q$~\cite[\emph{cf.} Lem.~9.8]{Doe92}. By Proposition~\ref{prop:fixed_cocycle}, there exists some $\psi\sim \phi$ such that $\psi^q=\psi$ for all $q\in Q$ and $\psi(q)=1$ for all $q\in J_0\cap Q$. Define $\tilde\psi: J \to N$ as follows. For $j \in J$, write $j=xq$ for some $x\in J_0$ and $q\in Q$, and let $\tilde\psi(j)=\psi(x)$. We claim $\tilde\psi$ is well-defined. If $j=x_0q_0=x_1q_1$ for some $x_0,x_1 \in J_0$ and $q_0,q_1\in Q$, then $x_1^{-1}x_0=q_1q_0^{-1} \in J_0\cap Q$ so that $1 =\psi(x_1^{-1}x_0) = \psi(x_1^{-1})\psi(x_0)^{x_1}$ where $\psi(x_1^{-1})=(\psi(x_1)^{-1})^{x_1}$. Consequently, $\psi(x_0)=\psi(x_1)$ and so $\tilde\psi(x_0q_0) = \tilde\psi(x_1q_1)$, as required. Furthermore, for arbitrary $x_0,x_1 \in J_0$ and $q_0,q_1\in Q$,
	\begin{multline*}
		\tilde\psi(x_0q_0x_1q_1)  = \psi(x_0x_1^{q_0^{-1}}) = \psi(x_0) \psi(x_1^{q_0^{-1}})^{x_0^{-1}} \\
		= \psi(x_0) \psi(x_1)^{(x_0q_0)^{-1}} = \tilde\psi(x_0q_0)\tilde\psi(x_1q_1)^{(x_0q_0)^{-1}},
	\end{multline*}
	where the third equality follows from the fact that $\psi^q=\psi$ for $q\in Q$. Therefore, $\tilde \psi \in Z^1(J, N)$ and $\tilde\psi|_{J_0} = \psi$, so that $\res^J_{J_0}$ is surjective.
\end{proof}

We will also make use of:

\begin{proposition}
	\label{prop:res_pi}
	For a profinite group $J$ and a finite $J$-group $N$ that is also a $p$-group, suppose $NJ$ is prosupersolvable. Let $\pi$ denote the set of primes not exceeding $p$ and let $Q$ be a Hall $\pi$-subgroup of $J$. Then  $\res^J_Q: H^1(J, N) \xrightarrow\iso H^1(Q, N)$ is an isomorphism.
\end{proposition}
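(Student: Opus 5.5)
Since $NJ$ is prosupersolvable, $J$ is prosupersolvable, so it has a Sylow tower. For the primes $r > p$, the Hall $\pi'$-subgroup $R$ of $J$ is normal, and $J = R \semi Q$. My plan is to induct through a chain of normal subgroups of prime index $r > p$, applying Proposition~\ref{prop:coprime_iso} at each step.

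First I would reduce to $J$ finite. Since $N$ is finite and discrete, the kernel $K$ of the action of $J$ on $N$ is open. Every cocycle is continuous and hence factors through some open normal subgroup. I would then pass to the limit over open normal $U \le K$ of $H^1(J/U, N)$, using $H^1(J,N) = \varinjlim_U H^1(J/U,N)$ and the compatibility of Hall subgroups with quotients, $QU/U$ being Hall in $J/U$. So assume $J$ is finite.

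Next, I claim that $C = C_J(N)$ contains $R$, i.e.\ every $r$-element of $J$ with $r>p$ acts trivially on $N$. In a supersolvable group $NJ$, take a chief series through $N$. Each chief factor inside $N$ has order $p$. An element of $J$ acts on such a factor through $\mathrm{Aut}(C_p)$, which has order $p-1$, and that order is coprime to $r > p$. So an $r$-element acts trivially on every chief factor of $N$. By coprime action, an automorphism of a $p$-group of order prime to $p$ that stabilizes a normal series with trivial action on the factors is trivial. Hence $R$ acts trivially on $N$.

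Now choose a chain $Q = J_k \le \dots \le J_1 \le J_0 = J$. Using the fact that $J/R \cong Q$ and that $R$ is supersolvable, I would take a chain $R = R_0 > R_1 > \dots > 1$ of subgroups normal in $J$ with prime-order factors $r_i > p$. Then $J_i = R_i Q$ is normal of prime index in $J_{i-1}$. Applying Proposition~\ref{prop:coprime_iso} to $J_i \normal J_{i-1}$ gives an isomorphism $H^1(J_{i-1},N) \iso \inv_{J_{i-1}} H^1(J_i,N)$. It remains to show that every class in $H^1(J_i,N)$ is $J_{i-1}$-invariant, so that $\res$ is a bijection onto all of $H^1(J_i,N)$. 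Composing these bijections then gives $\res^J_Q$ bijective.

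The invariance is the main obstacle. Write $J_{i-1} = J_i \langle x \rangle$ with $x \in R_{i-1}$. Since $x$ acts trivially on $N$, $\phi^x(y) = \phi(x^{-1} y x)$. Here $x^{-1}yx = y[y,x]$ with $[y,x] \in R_{i-1} \cap J_i = R_i$. Next I would use that $\phi|_{R_i}$ is a homomorphism into $N$, because $R_i$ acts trivially. Its image is a $p$-group, and $R_i$ is a $p'$-group, so $\phi|_{R_i}$ is trivial. The cocycle identity then gives $\phi^x(y) = \phi(y)\phi([y,x])^{y^{-1}} = \phi(y)$. So $\phi^x = \phi$ and invariance holds. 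If this approach fails, I would fall back on the argument of Proposition~\ref{prop:fixed_cocycle} directly.
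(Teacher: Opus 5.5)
Your argument breaks at the claim that $J_i = R_iQ$ is normal of prime index in $J_{i-1} = R_{i-1}Q$. The index is indeed the prime $r_i = [R_{i-1}:R_i]$. Normality, however, fails in general. We have $J_{i-1}/R_i \iso (R_{i-1}/R_i)\semi Q$, and the complement $J_i/R_i$ is normal exactly when $Q$ centralizes the cyclic factor $R_{i-1}/R_i$ of order $r_i$. Nothing forces this, because $Q$ acts through $\operatorname{Aut}(C_{r_i})$, whose order $r_i-1$ is even and may well involve primes in $\pi$.

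Concretely, take $J = S_3$, $p=2$, and $N = C_2$ with trivial action. Then $NJ \iso C_2\times S_3$ is supersolvable, $\pi=\{2\}$, and $R = A_3$. Your chain is $J_0 = S_3 > J_1 = Q$ with $\abs{Q}=2$, and $Q$ is not normal in $S_3$. So Proposition~\ref{prop:coprime_iso} cannot be applied. The same hidden assumption sits in your invariance computation. For $y\in Q$ and $1\ne x\in A_3$, the commutator $[y,x]$ is a nontrivial element of $A_3$, not of $J_1$. Moreover, $\phi^x$ is a cocycle on $J_1^{x^{-1}}\ne J_1$. Choosing a better chain does not help either: $Q$ need not be subnormal in $J$ (here it is self-normalizing), so in general no chain of normal prime-index steps leads from $J$ down to $Q$.

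The repair uses only ingredients you already have, applied to $R$ itself instead of a chain.

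\begin{itemize}
\item Since $R\normal J$ acts trivially on $N$, for every $\phi\in Z^1(J,N)$ the restriction $\phi|_R$ is a continuous homomorphism from a pro-$\pi'$ group into a $p$-group with $p\in\pi$, hence trivial.
\item The cocycle identity then gives $\phi(rq)=\phi(r)\phi(q)^{r^{-1}}=\phi(q)$, so $\phi$ is determined by $\phi|_Q$.
\item Conversely, any $\psi\in Z^1(Q,N)$ extends to $J=R\semi Q$ by $\tilde\psi(rq)=\psi(q)$. This is a cocycle because $R$ is normal and acts trivially.
\item A relation $\phi'(q)=n^{-1}\phi(q)n^{q^{-1}}$ on $Q$ propagates verbatim to all of $J$.
\end{itemize}

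This is the paper's proof. It packages the same facts as the inflation--restriction sequence $1\to H^1(J/R,N^R)\to H^1(J,N)\to H^1(R,N)=1$, with $N^R=N$ and $J/R\iso Q$.

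Your chief-factor argument that $R$ acts trivially is a valid alternative to the paper's. You use that each chief factor has order $p$, that $r\nmid p-1$, and that a coprime automorphism stabilizing a normal series is trivial. The paper instead observes that $R$ is the normal Hall $\pi'$-subgroup of the supersolvable group $NR$, so $[N,R]\le N\cap R=1$. Your reduction to finite $J$ is correct but unnecessary, since the direct argument works verbatim for profinite $J$.
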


\begin{proof}
	Under the hypotheses of the proposition, $J\iso M \semi Q$ where $M \normal J$ is the Hall $\pi'$-subgroup~\cite[Prop.~3.5]{Olt78}. In the inflation-restriction exact sequence,
	\begin{equation*}
		1 \to H^1(J/M, N^M) \xrightarrow{\operatorname{inf}^{J/M}_J} H^1(J, N) \xrightarrow{\res^J_M} \inv_J H^1(M, N),
	\end{equation*}
	$H^1(M, N)$ is trivial so that $H^1(J, N) \iso H^1(Q, N^M)$. As $M$ is also the Hall $\pi'$-subgroup of $NM$, we have $M \normal NM$ so that $NM\iso N \times M$ and in particular $N^M=N$, yielding $H^1(J, N) \iso H^1(Q, N)$. We claim $\res^J_Q$ affords this isomorphism; it suffices to note that $\res^J_Q \circ \operatorname{inf}^Q_J$ is the identity on $H^1(Q, N)$.
\end{proof}

We are now prepared to prove Lemma~\ref{lem:loc_conj}.

\begin{proof}[Proof of Lemma~\ref{lem:loc_conj}]
	Suppose $J$ is profinite and $N$ is a finite nilpotent $J$-group where either $NJ$ is prosupersolvable or $J$ is pronilpotent. The decomposition $N=\times_p N_p$ of $N$ into the direct product of its Sylow $p$-subgroups~\cite[Prop.~2.3.8]{Rib10} induces the isomorphism:
	\begin{equation}
		\label{eq:prod_init}
		H^1(J,N) \iso \times_{p\in \pi(J)} H^1(J,N_p),
	\end{equation}
	as coprime terms drop due to the Schur--Zassenhaus theorem for profinite groups~\cite[Thm.~2.3.15]{Rib10}. For a fixed prime $p\in \pi(J)$, we can then consider the restriction,
	\begin{equation}
		\label{eq:res}
		\res^J_{J_p}: H^1(J,N_p) \to \inv_J H^1(J_p, N_p),
	\end{equation}
	and inclusion,
	\begin{equation}
		\label{eq:inc}
		\inc^N_{N_p}: \inv_J H^1(J_p,N_p) \to \inv_J H^1(J_p, N).
	\end{equation}
	As the composition of the above maps gives us $\phi \mapsto \times_ p \phi|_{J_p}$, it suffices to show that~\eqref{eq:res} and~\eqref{eq:inc} are isomorphisms. For the latter map, we note that $\rho\circ \inc^N_{N_p}$ is the identity on $H^1(J_p,N_p)$ where $\rho: N \to N_p$ is the natural projection, so that $\inc^N_{N_p}$ is injective. We also have the associated exact sequence~\cite[Prop.~38 in \S{}I.5.5]{Ser02}:
	\begin{equation}
		\label{eq:inc_exact}
		H^1(J_p,N_p) \xrightarrow{\inc^N_{N_p}} H^1(J_p, N) \to H^1(J_p, N/N_p),
	\end{equation}
	where $H^1(J_p, N/N_p)$ is trivial so that $\inc^N_{N_p}$ in~\eqref{eq:inc_exact} is surjective and thus an isomorphism on $H^1(J_p,N_p)$. As $\inc^N_{N_p}$ and the projection $\rho$ preserve $J$-invariance, we may conclude that~\eqref{eq:inc} is an isomorphism. We now focus on the restriction~\eqref{eq:res}.

	\emph{For injectivity}, suppose $\phi\nsim\psi$ in $Z^1(J,N_p)$ but $\phi|_{J_p} \sim \psi|_{J_p}$. Then the collection $\mathcal L = \{ L \leq J: J_p \leq L, \phi|_L \nsim \psi|_L\}$ contains $J$ but not $J_p$. For any nonempty chain $(L_i)_{i \in I}$ in $\mathcal L$, $L'=\cap_{i\in I} L_i$ is closed and satisfies $J_p \leq L' \leq J$. Suppose by way of contradiction that for some $n\in N_p$, we had $\psi(x) = n^{-1} \phi(x) n^{x^{-1}}$ for all $x\in L'$. Then $E = \{ x \in J:  \psi(x) = n^{-1} \phi(x) n^{x^{-1}} \}$ would be an open set containing $L'$. The sets $\{L_i \setminus E\}_{i\in I}$ would be closed with empty intersection, implying $L_j \setminus E$ is empty for some $j\in I$ by the compactness of $J$. It would follow that $\phi|_{L_j} \sim \psi|_{L_j}$, contradicting $L_j \in \mathcal L$. In particular, Zorn's lemma implies that there exists some $L$ minimal with respect to inclusion such that $\phi\nsim\psi$ in $Z^1(L,N_p)$ but $\phi|_{J_p} \sim \psi|_{J_p}$. Replacing $J$ with $L$ if necessary, we may assume $J$ is minimal.

	If $N_pJ$ is prosupersolvable and $p$ is not the largest prime divisor of $\abs{J}$, then Prop.~\ref{prop:res_pi} implies that $\res^J_Q: H^1(J, N_p) \xrightarrow\iso H^1(Q, N_p)$ for some Hall $\pi$-subgroup $Q$ of $J$ containing $J_p$ where $\pi$ is the set of primes not exceeding $p$. By the minimality of $J$, $\phi|_Q\sim\psi|_Q$, so that $\phi \sim\psi$, a contradiction. Otherwise, let $J_0 \normal J$ be of prime index $p_0\ne p$. If $N_pJ$ is prosupersolvable, $p_0$ may be taken to be the smallest prime divisor of $J$. Again by the minimality of $J$, we must have $\phi|_{J_0}\sim\psi|_{J_0}$. Proposition~\ref{prop:coprime_iso} then implies $\res^J_{J_0}: H^1(J, N_p) \xrightarrow\iso \inv_J H^1(J_0, N_p)$ is an isomorphism, so that $\phi\sim\psi$, a contradiction. We conclude that~\eqref{eq:res} is injective.

	\emph{For surjectivity}, suppose $[\tau]\in  \inv_J H^1(J_p, N_p) \setminus \res^J_{J_p} H^1(J,N_p)$. The collection $\mathcal L = \{L \leq J: J_p \leq L, [\tau] \notin \res^{L}_{J_p} H^1(L,N_p)\}$ contains $J$ but not $J_p$. For any nonempty chain $(L_i)_{i \in I}$ in $\mathcal L$, $L'=\cap_{i\in I} L_i$ is closed and satisfies $J_p \leq L' \leq J$. Suppose by way of contradiction that $[\tau] \in \res^{L'}_{J_p} H^1(L',N_p)$ so that $\tilde\tau|_{J_p}=\tau$ for some $\tilde\tau \in Z^1(L', N_p)$. As $L'$ is compact and $N_p$ is discrete, $\tilde\tau(L')$ is finite. As $\tilde\tau^{-1}(1)$ is open in $L'$, there exists some open $E\normal J$ such that $E \cap L' \leq \tilde\tau^{-1}(1)$ and $n^e=n$ for all $n \in \tilde\tau(L')$, $e\in E$. Then $EL' \leq J$ and we can define $\hat \tau(ej)=\tilde \tau(j)$ for $e \in E, j\in L'$. It may be verified that $\hat \tau \in Z^1(EL', N_p)$ and $\hat \tau |_{L'} = \tilde \tau$. As in the previous argument, it follows that $EL'$ is an open subset containing $L'$, so compactness implies $L_j \leq EL'$ for some $j\in I$, contradicting $L_j \in \mathcal L$. Zorn's lemma again gives us some $L$ minimal with respect to inclusion such that $[\tau]\in  \inv_J H^1(J_p, N_p) \setminus \res^{L}_{J_p} H^1(L,N_p)$. We again proceed under the assumption that $J$ is minimal.

	As before, if $N_pJ$ is prosupersolvable and $p$ is not the largest prime divisor of $\abs{J}$, $\res^J_Q: H^1(J, N_p) \xrightarrow\iso H^1(Q, N_p)$ for some Hall $\pi$-subgroup $Q$ of $J$ containing $J_p$. By the minimality of $J$, $[\tau] \in \res^{Q}_{J_p} H^1(Q,N_p)$, implying that $[\tau] \in \res^{J}_{J_p} H^1(J,N_p)$, a contradiction. In the other case, we may then find $J_0 \normal J$ of prime index $p_0 \ne p$. Again by the minimality of $J$, there exists $\tilde \tau \in Z^1(J_0,N_p)$ such that $\tilde\tau|_{J_p} = \tau$. Write $J=J_0Q$ for some procyclic $p_0$-group $Q$. As $J_p\normal J$, we have, for $q\in Q$,
	\[
		\tilde\tau^q|_{J_p}
		= \left(\tilde\tau|_{J_p}\right)^q
		= \tau^q
		\sim \tau
		= \tilde\tau|_{J_p},
	\]
	so that the injectivity of $\res^{J_0}_{J_p}$ implies $\tilde \tau$ is $Q$-invariant, and consequently $J$-invariant. Prop.~\ref{prop:coprime_iso} again implies that $[\tau] \in \res^{J}_{J_p} H^1(J,N_p)$, yielding our final contradiction and completing the proof.
\end{proof}

\section{Applications to local conjugacy}
\label{s:local_coh}

In this section, we use the primary-type decomposition from the previous section to prove Theorem~\ref{thm:loc_conj} and Corollary~\ref{thm:loc_inclusion}. In the case of complements, the conjugacy result for finite $N$ is nearly immediate:

\begin{proposition}
	\label{prop:comps}
	Given $N$ and $G$ satisfying the hypotheses of Theorem~\ref{thm:loc_conj} where $N$ is finite, two closed complements of $N$ are conjugate if and only if they are locally conjugate.
\end{proposition}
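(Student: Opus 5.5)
Conjugate complements are trivially locally conjugate, so only the converse needs proof. Let $J$ and $J'$ be closed complements of the finite nilpotent normal subgroup $N$ in $G$, and suppose they are locally conjugate. Then $G=N\semi J$, and $J'$ corresponds to a cocycle $\phi\in Z^1(J,N)$: each $j\in J$ has a unique factorization $j=\phi(j)^{-1}j'$ (up to the paper's convention) with $j'\in J'$. Conjugacy of $J'$ with $J$ under $G=NJ$ is the same as $N$-conjugacy, because $J$ normalizes itself. So $J'$ is conjugate to $J$ exactly when $[\phi]$ is the distinguished point of $H^1(J,N)$. The hypotheses of Lemma~\ref{lem:loc_conj} hold: $NJ=G$ is prosupersolvable in the first case, and $J\iso G/N$ is pronilpotent in the second. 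The lemma makes $[\phi]\mapsto\times_p[\phi|_{J_p}]$ injective, so it suffices to show $\phi|_{J_p}\sim 1$ for each $p\in\pi(J)$.

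Fix $p$ and $J_p\in\Syl_p(J)$. By local conjugacy, some Sylow $p$-subgroup of $J'$ is conjugate to some Sylow $p$-subgroup of $J$. Since Sylow subgroups within $J$ and within $J'$ are each conjugate, there is $g\in G$ with $J_p^g\le J'$. Write $g=nj$ with $n\in N$ and $j\in J$. Then $J_p^{n j}\le J'$, so $J_p^{n}\le (J')^{j^{-1}}$.

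Next I would observe that $(J')^{j^{-1}}$ corresponds to a cocycle cohomologous to $\phi$. Conjugating by $j\in J$ changes $\phi$ to $\phi^j$, and the paper already notes that $\phi^j(x)=m^{-1}\phi(x)m^{x^{-1}}$ with $m=\phi(j)$. So we may replace $J'$ by this conjugate and assume $J_p^n\le J'$. The complement $J_p^n$ of $N$ in $NJ_p$ corresponds to the coboundary $x\mapsto n^{-1}n^{x^{-1}}$ (or its inverse, depending on the convention). The complement $J'\cap NJ_p$ corresponds to $\phi|_{J_p}$, and it contains $J_p^n$. Both are complements of $N$ in $NJ_p$, so they coincide, and their cocycles agree. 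Hence $\phi|_{J_p}$ is a coboundary, i.e. $[\phi|_{J_p}]$ is trivial in $H^1(J_p,N)$, and so also in $\inv_J H^1(J_p,N)$.

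Applying this for every $p$, the image of $[\phi]$ under the isomorphism of Lemma~\ref{lem:loc_conj} is the distinguished point. Hence $[\phi]$ is trivial and $J'$ is $N$-conjugate to $J$. The only delicate step is the bookkeeping: checking that conjugating $J'$ by an element of $J$ stays within the cohomology class of $\phi$, and that restricting a cocycle matches intersecting the complement with $NJ_p$. Both follow directly from the definitions in \S\ref{ss:note}, so I expect no real obstacle; the substance is in Lemma~\ref{lem:loc_conj}.
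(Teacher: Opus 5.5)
Your proposal is correct and follows the same route as the paper. You identify $J'$ with a cocycle $\phi\in Z^1(J,N)$, use local conjugacy to show $[\phi|_{J_p}]$ is trivial for each $p$, and conclude by the injectivity in Lemma~\ref{lem:loc_conj}. The difference is that you spell out the bookkeeping the paper's one-line proof leaves implicit: replacing $J'$ by a $J$-conjugate, and matching $J'\cap NJ_p$ with $\phi|_{J_p}$.
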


\begin{proof}
	Suppose $J$ and $J'$ are locally conjugate complements of $N$. Let $\phi \in Z^1(J,N)$ correspond to $J'$. For each prime $p$, we have $\phi|_{J_p} \sim 1|_{J_p}$ for some $J_p \in \Syl_p(J)$ where $1\in Z^1(J,N)$ denotes the distinguished point. Lemma~\ref{lem:loc_conj} implies that $\phi\sim 1$ so that $J$ and $J'$ are conjugate.
\end{proof}

We begin the proof of Theorem~\ref{thm:loc_conj} with the case that $N$ is finite.

\begin{proposition}
	\label{prop:thm_main_n_nilpotent}
	If $N$ is finite, Theorem~\ref{thm:loc_conj} holds.
\end{proposition}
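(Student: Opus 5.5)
Fix locally conjugate closed supplements $H$ and $H'$ of the finite normal subgroup $N$ in $G$. The plan is to reduce to Proposition~\ref{prop:comps}. Conjugate subgroups are clearly locally conjugate, so only the converse needs proof.

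\emph{Step 1: show $N\cap H$ and $N\cap H'$ are conjugate.} Since $N$ is normal and finite, $N\cap H\normal H$ is finite. Because $N$ is nilpotent, $N\cap H=\times_p (N\cap H)_p$, and $(N\cap H)_p = N\cap H_p = N_p\cap H_p$ for any $H_p\in\Syl_p(H)$; this holds because $N_p$ is characteristic in $N$ and $N\cap H_p$ is a Sylow $p$-subgroup of $N\cap H$. By local conjugacy, $H'_p=H_p^{g_p}$ for suitable Sylow subgroups and $g_p\in G$. Hence $(N\cap H')_p = N_p\cap H_p^{g_p} = (N\cap H)_p^{g_p}$. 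The key observation is that $g_p$ may be replaced by an element of $H' N$ or $HN$, which equals $G$. I would write $g_p=n_p h_p$ with $n_p\in N$ and $h_p\in H$, so that $(N\cap H)_p^{g_p}=(N\cap H)_p^{n_p h_p}$. I would then like a single $n\in N$ working for all $p$. Since $(N\cap H)_p\le N_p$ and $N=\times N_p$, conjugation by $n_p$ on the factor $N_p$ depends only on the $p$-component of $n_p$. So I would take $n=\prod_p (n_p)_p$ over the finitely many primes dividing $|N|$. The $h_p$ factors are harder. Because $(N\cap H)_p\normal H$, we get $(N\cap H)_p^{n_p h_p}=((N\cap H)_p^{h_p})^{h_p^{-1}n_ph_p}=(N\cap H)_p^{n_p'}$ with $n_p'\in N$, which removes the $h_p$ factors. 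Thus $N\cap H'=(N\cap H)^n$. After replacing $H'$ by $H'^{n^{-1}}$, we may assume $D:=N\cap H=N\cap H'$.

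\emph{Step 2: pass to the quotient by $D$.} Both $H$ and $H'$ normalize $D$, and $N$ normalizes $D$ only partially. I would set $M=N_G(D)\supseteq HH'$, which equals $H(N_N(D))$, with $N_N(D)$ normal in $M$ and supplemented by $H$ and $H'$. In $\bar M=M/D$, the subgroups $\bar H$ and $\bar H'$ become complements of the finite nilpotent normal subgroup $\overline{N_N(D)}$. The hypotheses pass to $\bar M$: subgroups and quotients of prosupersolvable groups are prosupersolvable, and $\bar M/\overline{N_N(D)}\iso G/N$ is pronilpotent when $G/N$ is. Local conjugacy in $G$ must also descend to local conjugacy in $M$. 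For this I would repeat the Step 1 trick: $H'_p=H_p^{nh}$ gives $H'_p=H_p^{n'}$ up to changing the Sylow subgroup within $H$, and then $n'$ normalizes $D_p=N\cap H_p$. Showing that it normalizes all of $D$, or adjusting it on the other primary components so that it does, uses the direct product decomposition of $N$ once more. With local conjugacy in $\bar M$ established, Proposition~\ref{prop:comps} gives conjugacy of $\bar H$ and $\bar H'$ in $\bar M$, and pulling back gives conjugacy of $H$ and $H'$ in $G$.

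The main obstacle is the final part of Step 2: ensuring the local conjugating elements can be chosen inside $N_G(D)$. The plan is to choose each conjugator in $N$ (which is possible since $G=HN$) and to decompose it into primary parts. The $p$-part moves $H_p\cap N$, while the $p'$-parts centralize $N_p$, so they can be discarded modulo corrections in $H$.
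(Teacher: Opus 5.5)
Your Step 1 is correct. The set-up of Step 2 is also sound: $M=N_G(D)=H\,N_N(D)$, and $\bar H,\bar H'$ are complements of the finite nilpotent normal subgroup $\overline{N_N(D)}$ in $\bar M=M/D$. The hypotheses of Theorem~\ref{thm:loc_conj} are inherited because $MN=G$, and conjugacy in $\bar M$ pulls back to $G$. This is a different route from the paper's. The paper inducts on $\abs{N}$, first reducing to $N$ a $p$-group via $G/N_p$ and $G/N_{p'}$, and then to $G=\langle H,H'\rangle$. There a common Sylow $p$-subgroup $P$ gives $N\cap H=N\cap P=N\cap H'\normal G$, so local conjugacy descends to $G/(N\cap H)$ for free and no normalizer ever appears. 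In your route everything hinges on $\bar H,\bar H'$ being locally conjugate in $\bar M$, i.e.\ on choosing each local conjugator inside $N_N(D)$. That step is not proved.

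Write the conjugator as $n=n_pn_{p'}$ with $n_p\in N_p$ and $n_{p'}\in N_{p'}$. The direct product decomposition does give $n_p\in N_N(D)$: $n$ normalizes $D_p=N\cap H_p=N\cap H'_p$, $n_{p'}$ centralizes $N_p$, and $n_p$ centralizes $D_{p'}$.

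The trouble is $n_{p'}$. It need not normalize $D_{p'}$. For instance, take $G=N\times C_p$ with $N$ a nonabelian $q$-group and $H=H'=D\times C_p$ with $D$ not normal in $N$; any $n\in N\setminus N_N(D)$ is a valid conjugator. Nor can $n_{p'}$ be discarded ``modulo corrections in $H$'': it centralizes $N_p$, not $H_p$. In $G=S_3$ with $N=A_3$, $H=\langle(1\,2)\rangle$, $H'=\langle(1\,3)\rangle$ and $p=2$, the only conjugator in $N$ is a $3$-cycle, so $n_p=1$. Discarding it leaves $H^h=H\not\le H'$ for every $h\in H$.

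The missing idea is a Frattini-type argument that replaces $n_{p'}$ by an element of $N_{N_{p'}}(D_{p'})$. Set $K=H_p^{n_p}$ and $L=N_{p'}K$. Both $K$ and $H'_p=K^{n_{p'}}$ are Sylow $p$-subgroups of $L$. Both normalize $D_{p'}$, since $H_p$, $n_p$ and $H'$ do. Hence they are conjugate in $N_L(D_{p'})=K\,N_{N_{p'}}(D_{p'})$. This yields $m\in N_{N_{p'}}(D_{p'})$ with $H'_p=H_p^{n_pm}$ and $n_pm\in N_N(D)$.

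With this inserted, your argument is complete and avoids induction altogether. As written, Step 2 has a gap exactly at the point you flagged, and the mechanism you propose for closing it does not work.
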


\begin{proof}
	We induct on $\abs{N}$. In the next two paragraphs, we adapt Losey and Stonehewer's original arguments to justify two simplifying assumptions.

	\emph{We may assume $N$ is a $p$-group}. If multiple primes divide $\abs{N}$, then for some prime $p$ we have the nontrivial decomposition $N=N_p \times N_{p'}$ for $N_p\in \Syl_p(N)$ and $N_{p'}$ a Hall $p'$-subgroup of $N$. In $G/N_p$, we have $H^{n_1} \leq N_p H'$ for some $n_1 \in N_{p'}$. Similarly, in $G/N_{p'}$, it follows that $H^{n_0} \leq N_{p'}H'$ for some $n_0 \in N_p$ so that $H^{n_0n_1} \leq N_pH' \cap N_{p'}H'$. If $g \in  N_pH' \cap N_{p'}H'$, then $g = n_2h_2 = n_3h_3$ for some $n_2 \in N_p$, $n_3 \in N_{p'}$, and $h_2, h_3 \in H'$. Then, $h_3h_2^{-1}=n_3^{-1}n_2 \in H'$ where $n_2$ and $n_3$ commute and have coprime orders so that $n_2, n_3 \in H'$. It follows that $N_pH' \cap N_{p'}H' = H'$ so that $H^{n_0n_1} \leq H'$. A second application of this argument implies that $H' \leq H^{n_4}$ for some $n_4\in N$. As $[G:H] = [N: N\cap H] < \infty$, and similarly $[G:H']<\infty$, it follows that $H^{n_0n_1} = H'$.

	\emph{We may assume $H$ and $H'$ generate $G$}. As $N$ is a $p$-group, we may suppose that $H$ and $H'$ share a common Sylow $p$-subgroup, say $P$. If $G_0 = \langle H, H' \rangle$ is strictly contained in $G$, then $H$ and $H'$ remain locally conjugate in $G_0$, as for all other primes $q\ne p$, Sylow $q$-subgroups of $H$ and $H'$ are also Sylow $q$-subgroups of $G_0$. However, $N\cap G_0 \lneq N$ so that the inductive assumption implies $H$ and $H'$ are conjugate.

	We now proceed under the above assumptions with $P \in \Syl_p(H) \cap \Syl_p(H')$. As $N\cap H \leq P$, $N \cap P = N \cap H \normal H$. Similarly, $N \cap P = N \cap H' \normal H'$. As $G=\langle H, H' \rangle$, it follows that $N\cap H = N \cap H'$ is normal in $G$. If $N\cap H$ is nontrivial, induction in $G/(N\cap H)$ allows us to conclude. Otherwise, $N\cap H$ is trivial, so that $H$ and $H'$ complement $N$ in $G$ and we may conclude by Proposition~\ref{prop:comps}.
\end{proof}

We are now prepared to prove Theorem~\ref{thm:loc_conj}.

\begin{proof}[Proof of Theorem~\ref{thm:loc_conj}]

	Given $N$ and $G$ as described, suppose $H$ and $H'$ are closed, locally conjugate supplements of $N$ in $G$. Let $\mathcal U$ denote the set of open normal subgroups of $G$. For each $U\in\mathcal U$, $N_U=N/(N\cap U)$ is nilpotent so that Proposition~\ref{prop:thm_main_n_nilpotent} implies the images of $H$ and $H'$ are conjugate in $G/(N\cap U)$. Thus, there exists $x_U \in G$ such that $(N\cap U)H^{x_U}=(N\cap U)H'$. In particular, for each $U\in \mathcal U$,
	\begin{equation*}
		K_U = \{x \in G: (N\cap U)H^x=(N\cap U)H' \} = N_G\big((N\cap U)H\big) x_U
	\end{equation*}
	is closed~\cite[Exer.~0.4(2)(a)]{Wil98} and nonempty. Furthermore, $K_U \subseteq K_V$ if $U\subseteq V$. It follows that $\{K_U\}_{U \in \mathcal U}$ has the finite intersection property. As $G$ is compact, $\cap_{U\in\mathcal U} K_U$ is non-empty and so contains some element, say $y$. Then, as $\mathcal U$ is filtered from below~\cite[Prop.~2.1.4]{Rib10},
	\[
		H^y = \big(\cap_{U\in\mathcal U} (N\cap U)H\big)^y
		= \cap_{U\in\mathcal U} (N\cap U)H^y
		=  \cap_{U\in\mathcal U} (N\cap U)H'
		= H'
	\]
	so that $H$ and $H'$ are conjugate.
\end{proof}

We now prove Corollary~\ref{thm:loc_inclusion}.

\begin{proof}[Proof of Corollary~\ref{thm:loc_inclusion}]
	Given $H$ and $G=N\semi J$ as described in the corollary, we note that $H$ supplements $N$ in $G$, as $NH$ contains a Sylow $p$-subgroup of $G$ for each prime $p$.
	As $N\cap H \normal H$, it follows that $N\cap H \normal G$. In $G/(N\cap H)$, $H/(N\cap H)$ and $J(N\cap H)/(N\cap H)$ are locally conjugate complements, so that Theorem~\ref{thm:loc_conj} implies they are conjugate. It follows that $J(N\cap H)$ and $H$ are conjugate in $G$, allowing us to conclude.
\end{proof}

\section{Fixed-point results and conclusions}
\label{s:fixed}

We now prove Corollary~\ref{cor:fix_pt}.

\begin{proof}[Proof of Corollary~\ref{cor:fix_pt}]
	Given $G=N \semi J$ and $\Omega$ as described in the hypotheses of the corollary, let $G_\alpha$ denote the stabilizer subgroup of $G$ for some $\alpha\in\Omega$. For each prime $p$, it follows that $(J_p)^{n_p} \leq G_\alpha$ for some $J_p \in\Syl_p(J)$ and $n_p\in N$. Corollary~\ref{thm:loc_inclusion} allows us to conclude $J^g \leq G_\alpha$ for some $g\in G$. In particular, $J$ fixes $g\cdot\alpha$.
\end{proof}

Notably, any local inclusion result along the lines of Corollary~\ref{thm:loc_inclusion} allows us to obtain an analogous fixed-point result. For abelian $N$, we have the following.

\begin{proposition}
	\label{thm:loc_inclusion_ab}
	In a profinite group $G$, suppose $H, H' \leq G$ each supplement some abelian $N \normal G$. If for each prime $p$, $H$ contains a conjugate of some Sylow $p$-subgroup of $H'$, then $H$ contains a conjugate of $H'$.
\end{proposition}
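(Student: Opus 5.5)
Since $N$ is abelian, $N\cap H$ is normalized by $N$ and by $H$. Because $NH=G$, this gives $N\cap H\normal G$, and likewise $N\cap H'\normal G$. The plan is to reduce to the case of complements and then use the abelian primary decomposition~\eqref{eq:ab_decomp}, made profinite by the same limit argument as in the proof of Theorem~\ref{thm:loc_conj}.

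\emph{Reduction.} Put $M=N\cap H$ and $\bar G=G/M$. Then $\bar H=H/M$ complements $\bar N=N/M$ in $\bar G$. The image $\bar H'$ of $H'$ still supplements $\bar N$, and for each $p$ some conjugate of a Sylow $p$-subgroup of $\bar H'$ lies in $\bar H$. It therefore suffices to find $g$ with $\bar H'^{g}\le\bar H$. So assume $N\cap H=1$. Then $N\cap H'$ has a conjugate of each of its Sylow subgroups inside $H$. Each such conjugate is still inside $N$, since $N$ is normal, so each lies in $N\cap H=1$. Hence $N\cap H'=1$, and both $H$ and $H'$ are complements of $N$.

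\emph{Complement case.} Take $N$ abelian with complements $H,H'$ that are locally conjugate; for complements, local inclusion gives local conjugacy, since Sylow subgroups of $H$ and $H'$ map onto Sylow subgroups of $G/N$. We want $H'$ conjugate to $H$. First assume $N$ is finite (discrete). Let $\phi\in Z^1(H,N)$ be the cocycle corresponding to $H'$. Local conjugacy gives, for each $p$, an $H_p\in\Syl_p(H)$ with $[\phi|_{H_p}]=1$. Conjugation by elements of $N$ does not change cohomology classes, and Sylow subgroups of $H$ are $H$-conjugate, so we may fix these choices.

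We need the profinite form of~\eqref{eq:ab_decomp}: if every restriction $\res^H_{H_p}[\phi]$ is trivial, then $[\phi]=1$. Here $H^1(H,N)$ is torsion with primary decomposition~\cite[Cor.~6.7.6]{Rib10}, and cor$\circ$res $=[H:H_p]$ cannot be used directly because the index is a supernatural number. Instead, $\phi$ is continuous and $N$ is finite, so $\phi$ is inflated from a finite quotient $H/U$ with $U$ open normal acting trivially on $N$ and $\phi|_U=1$. Then~\eqref{eq:ab_decomp} for the finite group $H/U$ and the images of the $H_p$ gives $[\phi]=1$ on $H/U$, hence on $H$. The only subtle point is that the inflated cocycle restricted to $H_pU/U$ is trivial; this follows from inflation--restriction together with the fact that $\phi|_U=1$.

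\emph{General $N$.} Replace $N$ by $N/(N\cap U)$ for $U\in\mathcal U$, the open normal subgroups of $G$. Then take the sets $K_U$ and intersect them using compactness, exactly as in the proof of Theorem~\ref{thm:loc_conj}, to get $H^y=H'$. The main obstacle is the profinite passage of~\eqref{eq:ab_decomp}, which hinges on that subtle point. If it proves awkward, the fallback is to work only with finite quotients $G/U$ throughout and apply the finite Losey--Stonehewer/Brown argument there before taking the compactness limit.
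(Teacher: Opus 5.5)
Your proof is correct, but it takes a different route from the paper's in the key step.

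\textbf{The paper's route.} The paper first treats finite $N$ by induction on $\abs{N}$. It reduces to a $p$-group as in Proposition~\ref{prop:thm_main_n_nilpotent}, quotients by $N\cap H$ when that is nontrivial, and then cites Shin's theorem~\cite{Shi95} for locally conjugate complements of an abelian normal subgroup. Infinite $N$ is handled afterwards by adapting the compactness argument of Theorem~\ref{thm:loc_conj}.

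\textbf{Your route.} You quotient by $N\cap H$ once, at the profinite level. This makes both the induction and the $p$-group reduction unnecessary, since your Sylow-by-Sylow argument shows $N\cap H'=1$ for all primes at once. You then prove the complement case yourself rather than citing it. You inflate the cocycle from a finite quotient $H/U$ with $U$ centralizing $N$ and $\phi|_U=1$. Injectivity of inflation transfers triviality of $\phi|_{H_p}$ to $H_pU/U$. Finally you apply \eqref{eq:ab_decomp} to the finite group $H/U$. In effect this establishes the injectivity half of \eqref{eq:ab_decomp} for profinite $H$ and finite $N$, and you rightly avoid corestriction with a supernatural index. Because you reduce to complements before taking the limit, your compactness step is exactly the one in Theorem~\ref{thm:loc_conj}. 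The paper's limit instead has to be adapted to inclusions of supplements.

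\textbf{Comparison.} Your argument is self-contained: it relies only on the standard finite-group fact behind \eqref{eq:ab_decomp}. It also shows directly that neither solvability nor a $p$-group hypothesis is needed. The paper's argument is shorter and parallels the nilpotent case, but it depends on an external theorem. The fallback remark at the end of your proposal is not needed.
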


\begin{proof}
	First suppose $N$ is finite and induct on $\abs{N}$. As in the proof of Proposition~\ref{prop:thm_main_n_nilpotent}, we may assume that $N$ is a $p$-group. If $N\cap H \normal G$ is nontrivial, we may apply induction in $G/(N\cap H)$ to conclude. Otherwise, $N \cap H$ is trivial, so that $N \cap H'$ is as well. It follows that $H$ and $H'$ are locally conjugate complements of $N$ and thus conjugate by Shin's result~\cite{Shi95}. The argument for non-finite $N$ mirrors the proof of Theorem~\ref{thm:loc_conj}.
\end{proof}

This in turn gives us an analogue of Corollary~\ref{cor:fix_pt} for abelian subgroups $N$:
\begin{proposition}
	\label{thm:fix_pt_ab}
	Suppose a profinite group $G$ acts transitively and with closed point stabilizers on some nonempty set $\Omega$ and that $H\leq G$ supplements some abelian $N\normal G$. If for each prime $p$, a Sylow $p$-subgroup of $H$ fixes an element of $\Omega$, then $H$ fixes an element of $\Omega$.
\end{proposition}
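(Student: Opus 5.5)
We deduce the statement from Proposition~\ref{thm:loc_inclusion_ab} in the same way that Corollary~\ref{cor:fix_pt} was deduced from Corollary~\ref{thm:loc_inclusion}. Fix any $\alpha\in\Omega$ and let $G_\alpha$ be its stabilizer. By hypothesis $G_\alpha$ is closed. The aim is to show that $G_\alpha$ supplements $N$ and contains a conjugate of a Sylow $p$-subgroup of $H$ for every prime $p$. Proposition~\ref{thm:loc_inclusion_ab}, applied with $G_\alpha$ in the role of $H$ and $H$ in the role of $H'$, then gives $H^g\leq G_\alpha$ for some $g\in G$. Consequently $H$ fixes $g\cdot\alpha$, since $h\cdot(g\cdot\alpha)=g\,(g^{-1}hg)\cdot\alpha=g\cdot\alpha$.

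\emph{Local containment.} Fix a prime $p$. By hypothesis some $H_p\in\Syl_p(H)$ fixes some $\beta\in\Omega$, so $H_p\leq G_\beta$. Because the action is transitive, $\beta=x\cdot\alpha$ for some $x\in G$. Then $G_\beta=xG_\alpha x^{-1}$, and so $H_p^{x}=x^{-1}H_px\leq G_\alpha$. Thus $G_\alpha$ contains a conjugate of a Sylow $p$-subgroup of $H$ for every prime $p$.

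\emph{$G_\alpha$ supplements $N$.} This is the only point that needs care, because Proposition~\ref{thm:loc_inclusion_ab} requires both subgroups to supplement $N$. Since $N\normal G$ and $G_\alpha$ is closed, $NG_\alpha$ is a closed subgroup of $G$. For each prime $p$ it contains $NH_p^{x}$, which is conjugate to $NH_p$ because $N$ is normal. Now $H$ supplements $N$, so $G=NH$ and $G/N\iso H/(N\cap H)$. Hence the image of $H_p$ is a Sylow $p$-subgroup of $G/N$. It follows that $NG_\alpha/N$ contains a Sylow $p$-subgroup of $G/N$ for every $p$. A closed subgroup of a profinite group that contains a Sylow $p$-subgroup for every prime $p$ has supernatural index coprime to every prime, so it is the whole group. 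Therefore $NG_\alpha=G$, that is, $G_\alpha$ supplements $N$. (This is the same observation used at the start of the proof of Corollary~\ref{thm:loc_inclusion}.)

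With both hypotheses of Proposition~\ref{thm:loc_inclusion_ab} established, the first paragraph completes the proof. The supplement step is the only real check, and it reduces to the Sylow-index argument above.
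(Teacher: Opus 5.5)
Your proposal is correct and follows the route the paper intends: the paper gives no separate proof, presenting this as a consequence of Proposition~\ref{thm:loc_inclusion_ab} in the same way Corollary~\ref{cor:fix_pt} follows from Corollary~\ref{thm:loc_inclusion}, namely by applying it to a point stabilizer $G_\alpha$ and conjugates of the Sylow subgroups of $H$. Your explicit Sylow-index check that $G_\alpha$ supplements $N$ supplies the one hypothesis of Proposition~\ref{thm:loc_inclusion_ab} that the paper leaves implicit.
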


\subsection*{Acknowledgements}
The author thanks Gareth Tracey for his invitation to speak at the University of Warwick's Algebra Seminar and the warm hospitality he received there. He is also grateful to Mark Grant for his correspondence on the necessity of conditions in Lemma~\ref{lem:loc_conj} and to Pierre-Emmanuel Caprace for feedback on the manuscript.


\end{document}